\newtheorem{theorem}{Theorem}
\newtheorem{corollary}[theorem]{Corollary}
\newtheorem{lemma}[theorem]{Lemma}
\newenvironment{proof}{\noindent{\bf Proof}\ }{\qed\bigskip}
\renewcommand{\ge}{\geqslant}
\newcommand{\FF}{\mathbb{F}}
\newcommand{\qed}{\nobreak\hfill
                  \vbox{\hrule\hbox{\vrule\hbox to 5pt
                  {\vbox to 8pt{\vfil}\hfil}\vrule}\hrule}}
\title{A note on blocks of finite groups with TI Sylow $p$-subgroups}
\author{Deniz Y\i lmaz}
\date{}
\providecommand{\keywords}[1]
{
  \small\smallskip\par	
  \hspace{2ex}\textbf{Keywords:} #1
}
\providecommand{\msc}[1]
{
  \small\smallskip\par	
  \hspace{2ex}\textbf{MSC2020:} #1
}
\begin{document}
\sloppy
\maketitle
\begin{abstract}
Let $\FF$ be an algebraically closed field of characteristic zero. Recently, we proved that isotypic blocks are functorially equivalent over $\FF$. In this article we provide an example of functorially equivalent blocks which are not perfectly isometric.
\end{abstract}

\keywords{block, perfect isometry, functorial equivalence.}
\msc{20C20, 20C34, 20J15.}

\medskip
Let $\FF$ denote an algebraically closed field of characteristic $0$ and let $k$ denote an algebraically closed field of characteristic $p>0$. In modular representation theory, there are different notions of equivalences between blocks of finite groups such as Puig equivalence, splendid Rickard equivalence, $p$-permutation equivalence, isotypies and perfect isometries (\cite{Broue1990}, \cite{BoltjeXu2008}, \cite{BoltjePerepelitsky2020}).  Each equivalence in that list implies the subsequent one and they are related to prominent outstanding conjectures in modular representation theory, such as Brou{\'e}'s abelian defect group conjecture (Conjecture 9.7.6 in~\cite{linckelmann2018}), Puig's finiteness conjecture (Conjecture 6.4.2 in~\cite{linckelmann2018}) and Donovan's conjecture (Conjecture 6.1.9 in~\cite{linckelmann2018}).

In \cite{BoucYilmaz2022} together with Bouc, we introduced another equivalence of blocks, namely functorial equivalences over $R$ where $R$ is a commutative ring. To each pair $(G,b)$ of a finite group $G$ and a block idempotent $b$ of $kG$, we associate a canonical diagonal $p$-permutation functor over $R$. If $(H,c)$ is another such pair, we say that $(G,b)$ and $(H,c)$ are {\em functorially equivalent over $R$} if their associated functors are isomorphic, see \cite[Section~10]{BoucYilmaz2022}.

We proved that the number of isomorphism classes of simple modules, the number of ordinary characters, and the defect groups are preserved under functorial equivalences over $\FF$ (\cite[Theorem~10.5]{BoucYilmaz2022}).  Moreover we proved that for a given finite $p$-group $D$, there are only finitely many pairs $(G,b)$, where $G$ is a finite group and $b$ is a block idempotent of $kG$ with defect groups isomorphic to $D$, up to functorial equivalence over $\FF$ (\cite[Theorem~10.6]{BoucYilmaz2022}) and we provided a sufficient condition for two blocks to be functorially equivalent over $\FF$ in the situation of Brou{\'e}'s abelian defect group conjecture (\cite[Theorem~11.1]{BoucYilmaz2022}).

With Bouc, we also showed that if two blocks are $p$-permutation equivalent, then they are functorially equivalent over $R$, for any $R$ (\cite[Lemma~10.2(ii)]{BoucYilmaz2022}). The natural question hence is to understand the relation between functorial equivalences and isotypies and perfect isometries. Recently, we proved that isotypic blocks are functorially equivalent over $\FF$ and provided an example of perfectly isometric blocks which are not functorially equivalent over $\FF$ \cite{Yilmaz2023}.

The remaining open question is to understand whether a functorial equivalence over $\FF$ implies a perfect isometry. In this paper, we give a negative answer to this question by showing that the principal $2$-block of a simple Suzuki group is functorially equivalent to its Brauer correspondent. 

\begin{theorem}\label{thm mainthm}
Let $G$ be a finite group with TI Sylow $p$-subgroup $P$. Let $b$ be a block idempotent of $kG$ with a defect group $P$ and let $c$ be the block idempotent of $kN_G(P)$ which is in Brauer correspondence with $b$. Then the pairs $(G,b)$ and $(N_G(P),c)$ are functorially equivalent over $\FF$.
\end{theorem}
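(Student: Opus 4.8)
The plan is to exploit the machinery of diagonal $p$-permutation functors together with the structural description of blocks with TI (trivial intersection) Sylow $p$-subgroups. The key input is the theorem of the author and Bouc that a $p$-permutation equivalence between $(G,b)$ and $(H,c)$ induces a functorial equivalence over any ring $R$ (\cite[Lemma~10.2(ii)]{BoucYilmaz2022}); more precisely, the functorial equivalence class of $(G,b)$ is determined by the image of the block in the diagonal $p$-permutation category, and this image is controlled by the $p$-permutation (bi)modules between $b$ and $c$. So the first step is to recall, from the theory of blocks with TI defect groups (work of Brauer, and the fusion-theoretic treatment in, e.g., \cite{linckelmann2018}), that when $P$ is TI the fusion system of $b$ on $P$ is the same as that of $c$ on $P$ (all $G$-fusion of $p$-subgroups is already $N_G(P)$-fusion, since any nontrivial $p$-subgroup lies in a \emph{unique} conjugate of $P$), and hence by a theorem of Broué--Puig / Linckelmann the blocks $b$ and $c$ are \emph{isotypic}, in fact one has an explicit perfect isometry coming from the structure of $G$ relative to the strongly $p$-embedded subgroup $N_G(P)$.

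The second step is to upgrade this to the level of diagonal $p$-permutation functors. Here I would examine the canonical diagonal $p$-permutation functor attached to $(G,b)$: by \cite[Section~10]{BoucYilmaz2022} it is built from the $k$-linear category whose objects are finite groups and whose morphisms encode $p$-permutation bimodules, evaluated at the pair $(G,b)$. The decisive point is that for a block with TI defect group, the "local" and "global" data coincide in a strong sense: the relevant Brauer pairs are all conjugate into $(P, e)$ for $e$ a block of $k C_G(P)$, and the relevant $p$-permutation bimodules between $b$-modules and $c$-modules are sums of summands of $\mathrm{Ind}_{\Delta(R,\phi)}$-type modules where $R \le P$. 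Because $P$ is TI, the only $R$ contributing nontrivially is $R = P$ itself (the other summands are projective relative to proper subgroups that meet $P$ trivially, hence split off), and over $\FF$ (characteristic zero) the contribution of such modules is computed on the level of characters. I would therefore show that the functor $\mathrm{Ind}$-$\mathrm{Res}$ pair associated to the Scott module (or the Green correspondent bimodule) $M = bkGc$ induces mutually inverse isomorphisms between the functors of $(G,b)$ and $(N_G(P),c)$ after evaluation, by checking this at each finite group $L$ — where it reduces to a statement about $p$-permutation $kL$-$kL$-bimodules, which only see the TI part.

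Concretely, the cleanest route is: (i) produce an explicit virtual $p$-permutation bimodule — the "difference" of the ordinary induced bimodule and its non-TI correction — realizing a $p$-permutation equivalence between $(G,b)$ and $(N_G(P),c)$ in the sense of \cite{BoltjePerepelitsky2020}; then (ii) invoke \cite[Lemma~10.2(ii)]{BoucYilmaz2022} to conclude functorial equivalence over $\FF$. Step (i) is where one must be careful: a genuine $p$-permutation equivalence need not exist on the nose, but what does exist for TI defect groups is a \emph{virtual} $p$-permutation equivalence, and one should verify that the argument of \cite[Lemma~10.2(ii)]{BoucYilmaz2022} applies to virtual equivalences as well — which it does, since the construction of the functor is additive in the bimodule variable and only depends on the class in the diagonal $p$-permutation Grothendieck group. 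The verification that the relevant element of $T^{\Delta}(G\times N_G(P), b\otimes c^*)$ is invertible (i.e. composes to the identities $[b]$ and $[c]$ in the respective diagonal $p$-permutation rings) is the technical heart: one computes the composition via the formula for the product of $p$-permutation bimodules in terms of Brauer constructions, and the TI hypothesis forces all the "off-diagonal" Brauer constructions at subgroups $1 \neq R < P$ to vanish.

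The main obstacle I anticipate is precisely establishing the existence and invertibility of this virtual $p$-permutation equivalence — equivalently, showing that the $G$-$N_G(P)$-bimodule $bkGc$ (or a suitable correction of it) behaves like a splendid/$p$-permutation equivalence up to terms that are projective relative to TI-intersection subgroups, and that these error terms are killed upon passing to the diagonal $p$-permutation functor over a field of characteristic zero. This should follow by combining the known module-theoretic structure of $bkG$ as a $p$-permutation bimodule with the character-theoretic perfect isometry of Broué, but making the bookkeeping precise — tracking which indecomposable summands of $bkGc$ have vertices inside $\Delta(P)$ versus vertices meeting $\Delta(P)$ trivially — is the step that requires genuine care rather than routine computation.
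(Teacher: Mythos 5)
There is a genuine gap, and it is fatal to the proposed route. Your plan rests on producing a (virtual) $p$-permutation equivalence between $(G,b)$ and $(N_G(P),c)$, and along the way you assert that the TI hypothesis forces $b$ and $c$ to be isotypic, indeed perfectly isometric. Both assertions fail: a $p$-permutation equivalence in the sense of \cite{BoltjePerepelitsky2020} implies an isotypy and in particular a perfect isometry, yet the principal $2$-block of $\mathrm{Sz}(2^{2n+1})$ is \emph{not} perfectly isometric to its Brauer correspondent (\cite{Cliff2000}, \cite{Robinson2000}), even though the Suzuki groups have TI Sylow $2$-subgroups. This is precisely the example the theorem is designed to cover, so under the stated hypotheses no $p$-permutation equivalence --- genuine or virtual; the Boltje--Perepelitsky notion is already a virtual one, namely an invertible element of $T^{\Delta}(G\times N_G(P), b\otimes c^{*})$ --- can exist in general. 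The ``technical heart'' you identify (invertibility of that element) is therefore not a delicate verification but an impossibility, and equality of fusion systems does not rescue it: having the same fusion system does not imply isotypy.

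What the TI hypothesis actually provides is weaker, and it is exactly what the paper uses: by the classical result \cite[Theorem~9.8.6]{linckelmann2018}, the bimodule $bkGc$ and its dual induce a \emph{stable} equivalence of Morita type between $kGb$ and $kN_G(P)c$, hence a stable $p$-permutation equivalence and a \emph{stable} functorial equivalence over $\FF$. By \cite[Theorem~9.2]{BlauMichler1990} the two blocks have the same number of isomorphism classes of simple modules, and the main theorem of \cite{BoucYilmaz2023} then upgrades a stable functorial equivalence with matching numbers of simple modules to a functorial equivalence over $\FF$. If you want to salvage your argument, replace the search for an invertible element of $T^{\Delta}$ by this two-step descent: the vertex bookkeeping for summands of $bkGc$ that you describe is roughly what underlies the stable equivalence, but the passage from ``stable'' to honest functorial equivalence must come from \cite{BoucYilmaz2023}, not from a $p$-permutation equivalence.
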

\begin{proof}
Since $P$ is a TI Sylow $p$-subgroup, by a well-known result (see \cite[Theorem~9.8.6]{linckelmann2018} for instance), the $(kGb,kN_G(P)c)$-bimodule $bkGc$ and its dual induce a stable equivalence of Morita type between $kGb$ and $kN_G(P)c$. In particular, we have a stable $p$-permutation equivalence and hence a stable functorial equivalence over $\FF$ between $(G,b)$ and $(N_G(P),c)$. Moreover, by \cite[Theorem~9.2]{BlauMichler1990}, $kGb$ and $kN_G(P)c$ have the same number of isomorphism classes of simple modules. Therefore, by \cite{BoucYilmaz2023}, there is a functorial equivalence over $\FF$ between $(G,b)$ and $(N_G(P),c)$.
\end{proof}

\begin{corollary}
For $p=2$, the principal $2$-block of the Suzuki group $\mathrm{Sz}(2^{2n+1})$, $n\ge 1$, is functorially equivalent over $\FF$ to its Brauer correspondent. In particular, a functorial equivalence over $\FF$ between blocks does not necessarily imply a perfect isometry.
\end{corollary}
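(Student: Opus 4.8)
The first assertion is an immediate application of Theorem~\ref{thm mainthm}. For $q=2^{2n+1}$ with $n\ge 1$, the simple Suzuki group $G=\mathrm{Sz}(q)$ has a Sylow $2$-subgroup $P$ of order $q^2$, a Suzuki $2$-group, and $P$ is a TI subgroup of $G$; this is classical, going back to Suzuki's construction, and one also knows that $N_G(P)=P\rtimes C_{q-1}$ is the Borel subgroup. Taking $b$ to be the principal block idempotent of $kG$, its Brauer correspondent $c$ is, by Brauer's third main theorem, the principal block idempotent of $kN_G(P)$. Hence Theorem~\ref{thm mainthm} applies and yields a functorial equivalence over $\FF$ between $(G,b)$ and $(N_G(P),c)$. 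What remains, and what makes the ``in particular'' nontrivial, is to check that these two blocks are \emph{not} perfectly isometric.

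The plan for that is to exhibit an invariant of perfect isometries (over the usual coefficient ring $\calO$ with residue field $k$) that separates $kGb$ from $kN_G(P)c$. One has to be careful: the stable equivalence of Morita type produced in the proof of Theorem~\ref{thm mainthm}, together with \cite[Theorem~9.2]{BlauMichler1990}, already forces the two blocks to agree in all the coarse invariants --- the numbers of ordinary and of modular irreducibles ($q+2$ and $q-1$), the isomorphism type of the defect groups, the multiset of heights, and even the elementary divisors of the Cartan matrix (the last because the cokernel of the Cartan map is a stable-equivalence invariant). So a genuinely finer invariant is needed, and there are two natural candidates: a perfect isometry induces an $\calO$-algebra isomorphism $Z(\calO Gb)\cong Z(\calO N_G(P)c)$, and it forces the two Cartan matrices to be $\mathrm{GL}_{q-1}(\ZZ)$-congruent as integral quadratic forms (not merely to share the same elementary divisors). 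On the Brauer-correspondent side everything is explicit: since $N_G(P)=P\rtimes C_{q-1}$ is a Frobenius group, the $q-1$ simple modules of $kN_G(P)c$ are one-dimensional, the decomposition matrix is the $(q-1)\times(q-1)$ identity together with one row all of whose entries are $1$ and two rows all of whose entries are $2^n$, so the Cartan matrix is $C=I_{q-1}+(q+1)J_{q-1}$ with $J_{q-1}$ the all-ones matrix, and the associated form attains the value $2$. On the Suzuki side one feeds in Burkhardt's computation of the $2$-decomposition matrix of the principal block of $\mathrm{Sz}(q)$ --- remembering that $B_0$ contains every irreducible character except the Steinberg character, and that the Galois action permutes the degree-$(q^2+1)$ characters, the degree-$(q\pm\sqrt{2q}+1)(q-1)$ characters and the two degree-$2^n(q-1)$ characters --- to read off its Cartan matrix, and, separately, to analyse its block center, and then one compares.

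The main obstacle is precisely this last comparison. Since all the numerical invariants, and even the elementary divisors of the Cartan matrices, coincide, the argument cannot rest on a dimension or divisibility count; one has to work with the full integral congruence class of the Cartan quadratic form of $kGb$ --- showing, say, that its minimum, or the lattice it defines, is incompatible with that of $I_{q-1}+(q+1)J_{q-1}$ --- or with the ring structure of the block center $Z(kGb)$, either of which requires the explicit decomposition numbers of $\mathrm{Sz}(q)$ and careful bookkeeping of the Galois symmetry. A shorter route, if one is content to quote it, is to cite directly that the principal $2$-block of $\mathrm{Sz}(q)$ is already known not to be perfectly isometric to its Brauer correspondent; combined with the first paragraph this gives the corollary, and with it the fact that functorial equivalence over $\FF$ does not imply a perfect isometry.
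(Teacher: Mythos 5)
Your proof is correct and, in the end, takes essentially the same route as the paper: apply Theorem~\ref{thm mainthm} (the Sylow $2$-subgroups of $\mathrm{Sz}(q)$ are TI, and Brauer's third main theorem identifies the Brauer correspondent of the principal block), then quote the known fact that the principal $2$-block of $\mathrm{Sz}(q)$ is not perfectly isometric to its Brauer correspondent --- the paper cites Cliff and Robinson precisely for this. The long intermediate sketch of a direct Cartan-matrix or block-center comparison is never actually carried out and is not needed once you invoke that citation.
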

\begin{proof}
The first assertion follows from Theorem~\ref{thm mainthm} and the second assertion follows from the fact the principal $2$-block of $\mathrm{Sz}(2^{2n+1})$ is not perfectly isometric to its Brauer correspondent, see \cite{Cliff2000} or \cite{Robinson2000}.
\end{proof}

\begin{flushleft}
Deniz Y\i lmaz, Department of Mathematics, Bilkent University, 06800 Ankara, Turkey.\\
{\tt d.yilmaz@bilkent.edu.tr}
\end{flushleft}


\begin{thebibliography}{00}

  
\bibitem[B90]{Broue1990}{\sc M.~Brou{\'e}:}
	Isom{\'e}tries parfaites, types de blocs, cat{\'e}gories d{\'e}riv{\'e}es. 
	{\sl Ast{\'e}risque} No. {\bf 181-182} (1990), 61--92.  
  
\bibitem[BM90]{BlauMichler1990}{\sc H.I.~Blau, G.O.~Michler:}
	Modular representation theory of finite groups with T.I. Sylow $p$-subgroups.
   	{\sl Trans.  Amer. Math. Soc.} {\bf 319}(2) (1990), 417--468.

\bibitem[BP20]{BoltjePerepelitsky2020}{\sc R.~Boltje, P.~Perepelitsky:}
        $p$-permutation equivalences between blocks of group algebras.
        arXiv:2007.09253.

\bibitem[BX08]{BoltjeXu2008}{\sc R.~Boltje, B.~Xu:}
	On $p$-permutation equivalences: between Rickard equivalences and
   isotypies.
	{\sl Trans. Amer. Math. Soc.} {\bf 360}(10) (2008), 5067--5087.
	 
	
\bibitem[BY22]{BoucYilmaz2022}{\sc S.~Bouc, D.~Y\i lmaz:}
	Diagonal $p$-permutation functors, semisimplicity, and functorial equivalence of blocks.
	{\sl Adv.  Math.} {\bf 411} (2022), 108799.
	
\bibitem[BY23]{BoucYilmaz2023}{\sc S.~Bouc, D.~Y\i lmaz:}
	Stable functorial equivalence of blocks.
	{\sl Submitted.} arXiv:2303.06976.

\bibitem[C00]{Cliff2000}{\sc G.~Cliff:}
	On centers of $2$-blocks of Suzuki groups.
	{\sl J. Algebra} {\bf 226} (2000), 74--90.


\bibitem[L18]{linckelmann2018}{\sc M.~Linckelmann:}
	The block theory of finite group algebras. {V}ol. {II}.
	Cambridge University Press, Cambridge, 2018.


\bibitem[R00]{Robinson2000}{\sc G.R.~Robinson:}
	A note on perfect isometries.
	{\sl J. Algebra} {\bf 226} (2000), 71--73.


\bibitem[Y22]{Yilmaz2023}{\sc D.~Y\i lmaz:}
	Isotypic blocks are functorially equivalent.
	{\sl Submitted.} arXiv:2212.02054.
\end{thebibliography}
\end{document}